\documentclass[12pt,a4paper,leqno]{amsart}
\usepackage{amsmath,amsthm,amssymb}
\usepackage[cp1250]{inputenc}

\usepackage{tikz}
\usetikzlibrary{patterns}
\usetikzlibrary{fadings}

\setlength{\parskip}{0.5cm}


\newcommand{\Ea }{\mathbb E}

\newcommand{\Ia }{\mathbb I}
\newcommand{\Ka }{\mathbb K}
\newcommand{\La }{\mathbb L}

\newcommand{\Pa }{\mathbb P}
\newcommand{\Qa }{\mathbb Q}
\newcommand{\Ra }{\mathbb R}
\newcommand{\Sa }{\mathbb S}
\newcommand{\Ca }{\mathbb C}

\newcommand{\bfp}{\mathbf{p}}
\newcommand{\bfx}{\mathbf{x}}
\newcommand{\bfy}{\mathbf{y}}
\newcommand{\bfz}{\mathbf{z}}


\newcommand{\Baa }{\mathcal B}

\newcommand{\cl}{\operatorname{cl}}
\newcommand{\dist}{\operatorname{dist}}

\newcommand{\rc}{\textrm{\texttt{RC}}}

\renewcommand{\int}{\operatorname{int}}

\newcommand{\ot}[5]{\draw[dashed,fill=gray,opacity=.3]  (#1,#2) -- (#1,#2+#3) -- (#1+#3,#2+#3) -- (#1+#3,#2);
  \draw[thick] (#1,#2+#3) -- (#1,#2) -- (#1+#3,#2);
  \draw[fill] (#1,#2) circle [radius=.07];
  \draw[fill=white] (#1,#2+#3) circle [radius=.035];
  \draw[fill=white] (#1+#3,#2+#3) circle [radius=.035];
  \draw[fill=white] (#1+#3,#2) circle [radius=.035];
  \draw (#1-.2,#2-.4) node[anchor=west,scale=.7,fill=white] {$#4$};
  \draw (2*#1/2+#3/2,2*#2/2+#3/2) node [fill=white,scale=.7] {$#5$};} 

\newcommand{\otc}[4]{\draw[dotted]  (#1,#2) -- (#1,#2+#3) -- (#1+#3,#2+#3) -- (#1+#3,#2);
  \draw[dotted] (#1,#2+#3) -- (#1,#2) -- (#1+#3,#2);
  \draw[fill] (#1,#2) circle [radius=.07];
  \draw[fill=white] (#1+#3,#2+#3) circle [radius=.035];
\draw (#1-.2,#2-.4) node[anchor=west,scale=.7] {$#4$};}

\newcommand{\otd}[4]{\draw[dashed]  (#1,#2) -- (#1,#2+#3) -- (#1+#3,#2+#3) -- (#1+#3,#2);
  \draw[thick] (#1,#2+#3) -- (#1,#2) -- (#1+#3,#2);
  \draw[fill] (#1,#2) circle [radius=.07];
  \draw[fill=white] (#1,#2+#3) circle [radius=.035];
  \draw[fill=white] (#1+#3,#2+#3) circle [radius=.035];
  \draw[fill=white] (#1+#3,#2) circle [radius=.035];
\draw (#1-.2,#2-.4) node[anchor=west,scale=.7] {$#4$};}

\newtheorem{thm}{Theorem}
\newtheorem*{thm*}{Theorem}
\newtheorem{pro}[thm]{Proposition}
\newtheorem{lem}[thm]{Lemma}

\newtheorem{cor}[thm]{Corollary}
\newtheorem{fac}[thm]{Fact}

\title{On \rc-spaces}
\subjclass[2000]{Primary: 54; Secondary: 05}
\keywords{Regular closed sets, Niemytzki plane, Sorgenfrey plane}

\author{Wojciech Bielas}
\address{Institute of Mathematics, University of Silesia, ul. Bankowa 14, 40-007 Katowice}
\email{wojciech.bielas@us.edu.pl}

\author{Szymon Plewik}
\address{Institute of Mathematics, University of Silesia, ul. Bankowa 14, 40-007 Katowice}
\email{plewik@math.us.edu.pl}

\begin{document}

\maketitle

\begin{abstract}
 Following Frink's characterization of completely regular spaces, we say that a regular $T_1$-space is an  \rc-space whenever the family of all regular open sets constitutes a regular normal base. Normal spaces are \rc-spaces and there exist completely regular spaces which are not \rc-spaces. So the question arises, which of the known examples of completely regular and not normal  spaces are \rc-spaces. We show that the Niemytzki plane and the Sorgenfrey plane are \rc-spaces. \end{abstract}

\section{Introduction} \label{s1}

The aim of this note is to examine a topological space in which all regular open sets form a  normal base.
For a normal space, the family of all open sets is a  normal base, therefore we  narrow our attention to a completely regular space which is not normal.
  O. Frink defined a normal base and, using this notion, gave the following characterization of a completely regular space,  compare \cite{fri} or  \cite[Exercise 1.5.G]{en}.

\begin{thm*}
A $T_1$-space $X$ is completely regular if and only if there exists a base $\Baa$ satisfying the following conditions.
\vspace{-0.5cm} \begin{enumerate}
\item If  $x \in U \subset  X$, where $U$ is open, then  there exists a set $V\in\Baa$ such that $x \in X \setminus  V \subset U$. 
\item If $U,V\in \Baa$ and $U\cup V =X$, then there exist sets $U^*, V^* \in \Baa$ such that $X \setminus V \subset U^* \subset X \setminus  V^* \subset U.$  
\end{enumerate}
\end{thm*}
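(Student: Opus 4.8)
The plan is to prove the two implications separately; the converse (complete regularity $\Rightarrow$ existence of the base) is a direct verification, while the forward implication carries essentially all the work through a Urysohn-type construction. For the converse, I would take $\Baa$ to be the family of all cozero sets $\{y : h(y)\neq 0\}$ with $h\in C(X)$; since $X$ is completely regular these form a base for the open topology. Condition (1) is then immediate: given $x\in U$, choose continuous $f\colon X\to[0,1]$ with $f(x)=0$ and $f\equiv 1$ on $X\setminus U$, and put $V=\{y:f(y)>1/2\}\in\Baa$, so that $x\in X\setminus V\subseteq U$. For condition (2), if $U,V\in\Baa$ satisfy $U\cup V=X$ then $X\setminus U$ and $X\setminus V$ are disjoint zero-sets, hence completely separated (divide one defining function by the sum of the two); taking $h\colon X\to[0,1]$ with $h\equiv 0$ on $X\setminus U$ and $h\equiv 1$ on $X\setminus V$, the cozero sets $U^{*}=\{h>1/2\}$ and $V^{*}=\{h<1/2\}$ realize $X\setminus V\subseteq U^{*}\subseteq X\setminus V^{*}\subseteq U$.

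For the forward implication, I would fix $x$ and a closed set $F$ with $x\notin F$ and write $U=X\setminus F$. Reading $\Baa$ as a base for the open sets, every member is open and its complement is closed, so condition (2) furnishes, from any pair $P,Q\in\Baa$ with $P\cup Q=X$, a nested chain $X\setminus Q\subseteq P^{*}\subseteq X\setminus Q^{*}\subseteq P$ alternating closed and open sets --- precisely one layer of a Urysohn scale. The decisive feature is that this step is \emph{self-propagating}: from the chain one reads off the new identities $P^{*}\cup Q=X$ and $P\cup Q^{*}=X$, so (2) may be reapplied to each half, and no closure (ring) property of $\Baa$ beyond (1) and (2) is needed.

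Starting from members supplied by condition (1), which pin $x$ inside the construction and keep it clear of $F$, I would iterate this interpolation over the dyadic rationals of $[0,1]$ to obtain an increasing family of open sets $\{G_r\}$ with closed barriers $\{K_r\}$ satisfying $G_r\subseteq K_r\subseteq G_s$ whenever $r<s$, all containing a fixed neighbourhood of $x$ and all disjoint from $F$. The function $f(y)=\inf\{r:y\in G_r\}$ (set equal to $1$ when the set is empty) is then continuous, because $f^{-1}[0,a)=\bigcup_{r<a}G_r$ and $f^{-1}(a,1]=\bigcup_{r>a}(X\setminus K_r)$ are open, and it satisfies $f(x)=0$ together with $f\equiv 1$ on $F$, witnessing complete regularity.

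I expect the one genuinely delicate point to be the bookkeeping of this iteration: checking at every dyadic stage that the two members of $\Baa$ to which (2) is applied really do have union $X$ (which the self-propagation above guarantees), arranging the indices so that the barriers $K_r$ interleave monotonically, and handling the endpoints so that $f$ vanishes at $x$ and equals $1$ on $F$. Everything else --- the verification of (1) and (2) in the converse, and the continuity computation for $f$ --- is routine.
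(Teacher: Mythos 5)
Your proposal is correct and follows essentially the same route the paper indicates: the paper disposes of the theorem in one remark, taking the cozero sets as the base for the "only if" direction and invoking a repetition of the Urysohn-lemma argument for the "if" direction, which is precisely your two-part plan (including the key self-propagation of condition (2) that drives the dyadic interpolation). The only detail worth spelling out in the forward direction is that to initialize the scale you must first shrink $U$ to some $B\in\Baa$ with $x\in B\subseteq U$ before applying (1), so that the starting pair $B, V$ actually lies in $\Baa$ and satisfies $B\cup V=X$.
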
 \vspace{-0.5cm}
Note that the base consisting of all co-zero sets satisfies Frink's characterization, but repeating a proof of Urysohn's lemma one obtains a proof of Frink's theorem.
Later, A. Zame \cite{zam} formally defined a regular normal base as a ring of regular closed sets which is also a normal base, see also \cite{mis}.
We adapt the concept of a normal base in terms of open sets, omitting the assumption that a base must be a ring of sets.
A base of $T_1$-space is a \emph{regular normal base}, whenever it consists  of regular open sets which satisfies conditions (1) and (2) in Frink's theorem. Thus, any space with a regular normal base is completely regular. 

Our notation is mostly standard and follows \cite{en}. However, names of considered  topological spaces follows \cite{ss}. We introduce the \rc-space concept, which we think has not been studied and described in the literature so far.  Therefore, we have limited our results to issues that require the  geometric properties of the plane. 

\section{On \rc-spaces}
If $X$ is a normal space, then the family of all open sets in $X$ fulfils both conditions from Frink's characterization, i.e., the topology constitutes a normal base. Indeed, if $x\in U \subseteq X$, then the open set $X\setminus \{ x\}=V$  is enough for $(1)$ to be fulfilled.  The condition $(2)$ is just a form of the definition of normality.
  It appears to us that there is a gap in the literature, since we could not find any information concerning a space for which the family  of all regular open sets is a normal base. Note that  a union of two regular open sets may not  be regular open, so omitting the assumption that a normal (regular)  base has to be  a ring is a significant modification, which we introduce for issues discussed here. 
We  say that a regular space is an $\rc$\emph{-space}, if  every two disjoint  regular closed subsets have disjoint open neighbourhoods. Obviously, any normal space is an $\rc$-space. We assume that an \rc-space is a regular space, so if $x\in U$, where the set $U$ is regular open, then there exists a regular open set $W$ such that $x \in W \subseteq \cl W \subseteq U$, putting $V = X \setminus \cl W$ we have verified $(1)$.  By Frink's characterization, we get   that any $\rc$-space $ X $ is completely regular.  
There exist examples of $T_1 $-spaces with bases consisting of closed-open sets, i.e., examples of completely regular spaces with  regular normal bases, 
which are not $\rc$-spaces.  These examples are completely  regular spaces with a one-point extension to a regular space, which is not completely regular, for example,   spaces considering in \cite{mys} or \cite{kp}, also counterexamples constructed by the method initiated in \cite{jon}.

\begin{pro}
 Every regular one-point extension of an $\rc$-space is completely regular. \end{pro}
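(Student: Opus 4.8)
The plan is to verify complete regularity of the extension $Y=X\cup\{p\}$ directly, by producing for every point and disjoint closed set a separating function into $[0,1]$; since $X$ is an $\rc$-space it is already completely regular, and I will reuse the functions living on $X$. The first observation is that $\{p\}$ is closed in the regular $T_1$-space $Y$, so $X=Y\setminus\{p\}$ is open in $Y$; hence $\cl_X A=\cl_Y A\cap X$ for $A\subseteq X$, and a continuous $f\colon X\to[0,1]$ extends to a continuous $\tilde f\colon Y\to[0,1]$ as soon as its values are controlled near $p$. I would split the separation problem into two cases: (A) separating $p$ from a closed set $F\subseteq X$ with $p\notin F$, and (B) separating a point $y\in X$ from a closed set $F\ni p$ with $y\notin F$. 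The remaining configuration, a point $y\in X$ and a closed $F$ with $p\notin F$, reduces to (B) by replacing $F$ with the still closed, still $y$-avoiding set $F\cup\{p\}$.

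Case (B) should need only the complete regularity of $X$. Using regularity of $Y$ I would pick an open $U\ni p$ with $y\notin\cl_Y U$, and set $\tilde F=(F\setminus\{p\})\cup\cl_X(U\cap X)$, a closed subset of $X$ not containing $y$. Complete regularity of $X$ yields a continuous $f\colon X\to[0,1]$ with $f(y)=0$ and $f\equiv1$ on $\tilde F$; extending by $\tilde f(p)=1$ is continuous at $p$ because $\tilde f\equiv1$ on the open neighbourhood $U$, and continuous on $X$ because $X$ is open in $Y$. Since $f\equiv1$ on $F\setminus\{p\}$ and $\tilde f(p)=1$, this separates $y$ from $F$.

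Case (A) is where the $\rc$-hypothesis must be used. Applying regularity of $Y$ twice I would fix open sets with $p\in V'\subseteq\cl_Y V'\subseteq V\subseteq\cl_Y V\subseteq Y\setminus F$, and then form two subsets of $X$ that are closures of open sets, hence regular closed in $X$: the halo $C_1=\cl_X(V'\cap X)$ of $p$, and $C_2=\cl_X(X\setminus\cl_Y V)$. One checks $C_1\subseteq V$, so $C_1\cap C_2=\emptyset$ (an open set disjoint from a set is disjoint from its closure), while $F\subseteq X\setminus\cl_Y V\subseteq C_2$. The core step is then a Urysohn-type lemma inside $X$: in an $\rc$-space, disjoint regular closed sets are separated by a continuous $[0,1]$-function. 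I would prove this by the usual dyadic interpolation, where the single required insertion — given disjoint regular closed $A,B$, find a regular closed $D$ with $A\subseteq\int D$ and $D\cap B=\emptyset$ — is supplied by the defining property of $\rc$-spaces: take disjoint open $O_A\supseteq A$, $O_B\supseteq B$ and put $D=\cl O_A$. The recursion stays inside the class of regular closed sets because the closure of an open set is regular closed and the interior of a regular closed set is regular open, so each later insertion separates the regular closed set $D_r$ from the regular closed set $X\setminus\int D_s$. This gives $f\colon X\to[0,1]$ with $f\equiv0$ on $C_1$ and $f\equiv1$ on $C_2\supseteq F$; extending by $\tilde f(p)=0$ is continuous at $p$ because $\tilde f\equiv0$ on $V'$.

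I expect the main obstacle to be exactly Case (A): the point $p$ cannot be separated from $F$ by appealing to complete regularity of $X$ alone, because $p\notin X$ and continuity at $p$ forces the function to vanish on the whole trace $C_1$ of a neighbourhood of $p$, not merely at one point. This is what turns the problem into separating two genuine regular closed sets, which is precisely the $\rc$-property, and is what fails for the completely regular, non-$\rc$ examples cited above. A minor point to dispatch first is the trivial case in which $p$ is isolated in $Y$: then $Y$ is the free union $X\oplus\{p\}$ and complete regularity is immediate.
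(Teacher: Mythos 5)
Your proposal is correct and follows essentially the same route as the paper: the same case split (the added point versus a point of $X$), the same use of complete regularity of $X$ with a closed neighbourhood trace of the added point absorbed into the closed set, and the same reduction of the remaining case to separating two disjoint regular closed subsets of $X$ via a Urysohn-type dyadic recursion powered by the $\rc$-property. You merely spell out the recursion (closures of the separating open sets stay regular closed) which the paper compresses into ``repeat the usual proof of Urysohn's lemma''.
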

\begin{proof} Let $Y= X \cup \{\infty\}$ be a regular $T_1$-space such that its subspace $X$ is an $\rc$-space.  Suppose $F\subseteq Y$ is a closed set and let $p\in Y\setminus F$ and then choose an open set $V$ such that $$p\in V \subseteq \cl V \subseteq Y\setminus F.$$ If $p\not=\infty$, then choose  an open set $W$ such that $\infty \in W$ and $p \notin \cl W $.   If $f\colon X \to [0,1]$ is a continuous function such that $f(p) =0$ and $X \cap (F \cup \cl W) \subseteq f^{-1} (1)$, then  we get a continuous extension of $f$, putting $f(\infty)=1$.   But if $p=\infty$, then   choose open set $U$ such that $$ p\in U \subseteq \cl U \subseteq V \subseteq \cl V \subseteq Y\setminus F. $$ Next,  repeat the usual proof of  Urysohn's lemma -- compare \cite[Ex. 1.5.G]{en}, starting in the first step from the disjoint regular closed sets  $\cl U $ and $\cl (X \setminus \cl V)$. 
\end{proof}

\section{The Niemytzki plane is an \rc-space}	
Recall, compare \cite[p. 39]{en}, that the Niemytzki plane $\La = \Ra \times [0,\infty)$ is a closed half-plane which is endowed with the topology generated by open discs disjoint with the real axis $\La_1=\{(x,0)\colon x \in \Ra\}$	and all sets of the form $\{a\} \cup D$ where $D\subseteq \La$ is an open disc which is tangent to $\La_1$ at the point $a \in \La_1$.

For methods relevant to normal spaces, compare \cite {en}.
Interesting discussion of the lack of normality of the Niemytzki plane is presented in~\cite{cho}.
We shall use the following notation.
If $(x,y) \in \La_2 =\La \setminus \La_1$ and $\alpha >0$, then let $\Ka ((x,y), \alpha )$ denote the intersection of $\La$ and the open disc centred at $(x,y)$ and of radius $\alpha$. 
By $\Ka ((x,0), \alpha )$ we denote the union of the one-point set $\{(x,0)\}$ and the open disc centred at $(x,\alpha)$ and of radius $\alpha$. 
Using elementary properties of the plane, one immediately checks the validity of the following fact.

\begin{fac}\label{f1}
 Let $\{(x_n,y_n)\}_{n< \omega} $, where $y_n \geqslant 0$ and $x_n \in \Ra$, be a sequence which converges to a point $(x,y)$ with respect to the Euclidean topology.
 If $ \alpha >0$, then 
$$ \hspace{14mm}\textstyle \Ka ((x,y), \frac{\alpha}{2} ) \setminus\{(x,y)\} \subseteq \bigcup\{ \Ka ((x_n,y_n), \alpha )\colon n<\omega\} . \hspace{12mm} \qed $$
\end{fac}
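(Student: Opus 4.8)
The plan is to fix an arbitrary point $p \in \Ka((x,y),\frac{\alpha}{2}) \setminus \{(x,y)\}$ and to exhibit a single index $n$ with $p \in \Ka((x_n,y_n),\alpha)$; since $p$ is arbitrary, this yields the asserted inclusion. I would split into two cases according to whether the limit point lies in the open half-plane $\La_2$ or on the axis $\La_1$, because the very shape of the basic neighbourhoods $\Ka((x,y),\frac{\alpha}{2})$ and $\Ka((x_n,y_n),\alpha)$ depends on this.

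First, suppose $y>0$, so that $(x,y)\in\La_2$ and $\Ka((x,y),\frac{\alpha}{2})$ is just the intersection of $\La$ with the Euclidean disc of radius $\frac{\alpha}{2}$ about $(x,y)$. Since $y_n\to y>0$, for all large $n$ we have $y_n>0$, whence $\Ka((x_n,y_n),\alpha)$ is the intersection of $\La$ with the Euclidean disc of radius $\alpha$ about $(x_n,y_n)$. Here the triangle inequality does all the work: choosing $n$ so large that the Euclidean distance from $(x_n,y_n)$ to $(x,y)$ is less than $\frac{\alpha}{2}$, any $p$ within $\frac{\alpha}{2}$ of $(x,y)$ is within $\alpha$ of $(x_n,y_n)$, and so lies in $\Ka((x_n,y_n),\alpha)$.

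The substantive case is $y=0$, where $\Ka((x,0),\frac{\alpha}{2})\setminus\{(x,0)\}$ is the open tangent disc of radius $\frac{\alpha}{2}$ centred at $(x,\frac{\alpha}{2})$; writing $p=(a,b)$, membership amounts to the strict inequality $(a-x)^2+b^2<\alpha b$, which in particular forces $b>0$. Now the neighbourhoods $\Ka((x_n,y_n),\alpha)$ come in two geometric forms, so I would further separate the subsequence on which $y_n=0$ from the one on which $y_n>0$; at least one of these is infinite. When $y_n=0$, membership of $p$ in the tangent disc $\Ka((x_n,0),\alpha)$ reads $(a-x_n)^2+b^2<2\alpha b$, and since $(a-x)^2+b^2<\alpha b<2\alpha b$ strictly, this inequality survives replacing $x$ by $x_n$ once $x_n$ is close enough to $x$. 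When $y_n>0$, I would instead use that $(a-x)^2+b^2<\alpha b<\alpha^2$ (because $b<\alpha$ on the half-disc), so that the Euclidean distance from $p$ to $(x,0)$ is strictly below $\alpha$, and hence the distance from $p$ to $(x_n,y_n)$ is also below $\alpha$ for all large $n$.

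The only point requiring care — the main, and fairly modest, obstacle — is this on-axis case: one must observe that the halved radius in $\Ka((x,0),\frac{\alpha}{2})$ leaves exactly enough slack in the defining strict inequality to absorb the perturbation of $(x,0)$ to $(x_n,y_n)$, uniformly in which of the two neighbourhood shapes the sequence point contributes, and that the excluded point $(x,0)$ itself genuinely need not be covered (for $x_n\neq x$ it lies in no $\Ka((x_n,y_n),\alpha)$). Since every estimate above is strict, the convergences $x_n\to x$ and $y_n\to y$ supply the required index $n$ by continuity, completing the argument.
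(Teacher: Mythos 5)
Your argument is correct and is exactly the elementary verification that the paper leaves to the reader: the paper states the fact with only the remark that it is immediate from elementary properties of the plane, together with a figure illustrating the $y=0$ case, which is precisely the substantive case you work out (splitting the sequence according to $y_n=0$ or $y_n>0$ and using the slack between the radii $\frac{\alpha}{2}$ and $\alpha$). The only blemish is the inessential closing parenthetical: for $y_n>0$ the set $\Ka((x_n,y_n),\alpha)$ is a Euclidean disc intersected with $\La$ and may well contain $(x,0)$, so that point is not always uncovered; but since the statement excludes it anyway, this does not affect the proof.
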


The below picture illustrates  a proof of Fact \ref{f1} for a case $y=0$.

\begin{center}
 \begin{tikzpicture}
 
  \draw[dashed] (7,2) circle [radius=1];
	
	\draw[fill=black] (7,2) circle [radius=.025];
  \foreach \x in {5,...,17}
  \draw[fill=black,opacity=.2]  (2+\x*\x*\x*.001,0.4+\x*\x*\x*.0002+0.6) circle [radius=1];
  \draw[fill=black] (7,1) circle [radius=.025];

	\draw[fill=black] (2,1) circle [radius=.025];
    \draw[dashed] (2,1) circle [radius=1];
		\draw[dashed,pattern=dots] (2,1.5) circle [radius=0.5];
	 \draw[fill=white,white] (0,0) rectangle (10,1);
		\draw[-] (-.1,1) -- (11,1);
		\draw (7,.5) node  {$(x_n,y_n)$};
  \draw[->] (6,.5) -- (3,.5);
	\draw (2,.5) node  {$(x,0)$};
	\draw[fill=white] (2,1) circle [radius=.025];
 \end{tikzpicture}
\end{center}

But if $y>0$, then the set $\Ka ((x,y), \frac{\alpha}{2} ) \setminus\{(x,y)\}$ can be enlarged to $\Ka ((x,y), \alpha)$.

If subsets $F,G\subseteq\La$ are fixed, then for every $\alpha >0$ we set
\[
 F_\alpha= \{ (x,y) \in F\colon \Ka ((x,y), \alpha) \cap G= \emptyset \}.
\]

\begin{lem}\label{l2}
 Suppose $F$ and $G$ are closed and disjoint subsets of the Niemytzki plane.
 If $G$ is regular closed, then the closure of $F_\alpha$, with respect to the Euclidean topology, is disjoint from $G$.
\end{lem}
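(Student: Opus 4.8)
The plan is to argue by contradiction through the Euclidean \emph{sequential} closure, using Fact~\ref{f1} to transport the missing-$G$ property from the approximating points to their limit, and then invoking regular closedness of $G$ to finish. Fix a point $p=(x,y)\in G$ and suppose, towards a contradiction, that $p$ lies in the Euclidean closure of $F_\alpha$. Since $F_\alpha\subseteq F$ and $F\cap G=\emptyset$, the point $p$ cannot belong to $F_\alpha$ itself, so it must be a genuine Euclidean limit point of $F_\alpha$; as the Euclidean topology on $\La$ is metrizable, I may choose a sequence $\{(x_n,y_n)\}_{n<\omega}\subseteq F_\alpha$ converging to $p$ in the Euclidean sense. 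By the very definition of $F_\alpha$, each basic neighbourhood $\Ka((x_n,y_n),\alpha)$ is disjoint from $G$.

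Next I would feed this sequence into Fact~\ref{f1}, which gives
\[
\Ka\big(p,\tfrac{\alpha}{2}\big)\setminus\{p\}\subseteq\bigcup\{\Ka((x_n,y_n),\alpha):n<\omega\}.
\]
Because every term of the union on the right misses $G$, so does the whole union, and hence $\Ka(p,\frac{\alpha}{2})\cap G\subseteq\{p\}$. In words: the entire basic Niemytzki neighbourhood $\Ka(p,\frac{\alpha}{2})$ of the point $p\in G$ meets $G$ in at most the single point $p$.

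The decisive step then exploits the hypothesis that $G$ is \emph{regular closed}, that is, $G=\cl\,\int G$ in the Niemytzki topology. Since $p\in\cl\,\int G$ and $\Ka(p,\frac{\alpha}{2})$ is a Niemytzki-open neighbourhood of $p$, the intersection $W=\Ka(p,\frac{\alpha}{2})\cap\int G$ is a nonempty open subset of $G$. Every nonempty open set of the Niemytzki plane contains an ordinary or a tangent disc, and so has more than one point; thus $W$ contains some $q\neq p$. But then $q\in\Ka(p,\frac{\alpha}{2})\cap G$ contradicts the inclusion $\Ka(p,\frac{\alpha}{2})\cap G\subseteq\{p\}$ obtained above. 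This contradiction shows $p\notin\cl F_\alpha$ (Euclidean closure), and as $p\in G$ was arbitrary, the Euclidean closure of $F_\alpha$ is disjoint from $G$.

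I expect the crux to be conceptual rather than computational: isolating precisely where regular closedness is indispensable. The application of Fact~\ref{f1} only yields that $\Ka(p,\frac{\alpha}{2})$ meets $G$ in at most the point $p$, which for a merely closed set is not contradictory at all. Indeed, any nonempty $G\subseteq\La_1$ is closed with empty interior, and such a $G$ can sit in the Euclidean closure of $F_\alpha$ while each tangent disc $\Ka((x_n,y_n),\alpha)$ avoids it; this is exactly the phenomenon behind the failure of normality of the plane. It is the identity $G=\cl\,\int G$ that forbids $G$ from shrinking to the single point $p$ inside a basic neighbourhood, and this is the one hypothesis doing real work. The remaining ingredients — metrizability of the Euclidean topology, used to extract the convergent sequence, and the monotonicity of the neighbourhoods $\Ka(p,\cdot)$ in the radius — are routine.
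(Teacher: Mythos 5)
Your argument is correct and is essentially the paper's own proof, spelled out in more detail: both extract a Euclidean-convergent sequence from $F_\alpha$, apply Fact~\ref{f1} to conclude $G\cap \Ka((x,y),\frac{\alpha}{2})\subseteq\{(x,y)\}$, and then use regular closedness of $G$ (a point of $G=\cl\int G$ cannot have a neighbourhood meeting $G$ only in itself) to rule out $(x,y)\in G$. Your explicit identification of where $G=\cl\int G$ is indispensable is a useful clarification of the paper's rather terse wording, but it is the same proof.
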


\begin{proof}
 Consider a point $(x,y)$ which belongs to the closure of $F_\alpha$ with respect to Euclidean topology.
 Since $G$ is regular closed, using Fact \ref{f1}, we obtain
 \[\textstyle
 G\cap \Ka((x,y), \frac{\alpha}{2})\setminus\{(x,y)\} =\emptyset.
 \]
 But $ \Ka((x,y), \frac{\alpha}{2})$ is an open neighbourhood of the point $(x,y) \in \La$, hence $(x,y) \notin G$.
\end{proof}
	
In the next lemma we use the following notation.
If $(x,y)\in \La$ and $\mu >0$, then let $\Ca((x,y), \mu) $ be the circle centred at $(x,y)$ of radius $\mu.$

\begin{lem}\label{l3}
  Suppose $F$ and $G$ are closed and disjoint subsets of the Niemytzki plane and let $0<\varepsilon <1$ and $0<\alpha$.
  If $G$ is regular closed, then the closure of
  \[
    \bigcup \{\Ka((x,y), \varepsilon\alpha)\colon (x,y)\in F_{\alpha}\},
  \]
  with respect to the Niemytzki plane, is disjoint from $G$.
\end{lem}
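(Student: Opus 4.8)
The plan is to fix a point $p\in G$ and show that it lies outside the Niemytzki-closure of the open set
\[
 U=\bigcup\{\Ka((x,y),\varepsilon\alpha)\colon (x,y)\in F_\alpha\}.
\]
The preliminary remark is that, since $0<\varepsilon<1$, one has $\Ka((x,y),\varepsilon\alpha)\subseteq\Ka((x,y),\alpha)$ for every centre: for $y>0$ this is the inclusion of concentric Euclidean discs, and for $y=0$ it is the nesting of the tangent disc of radius $\varepsilon\alpha$ inside the tangent disc of radius $\alpha$ at the same point of $\La_1$. Hence $U\subseteq\bigcup\{\Ka((x,y),\alpha)\colon(x,y)\in F_\alpha\}$, which misses $G$ by the definition of $F_\alpha$; in particular $p\notin U$, and for every $(x,y)\in F_\alpha$ the point $p$ lies outside $\Ka((x,y),\alpha)$. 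I expect the argument to split according to whether $p$ lies off or on the axis $\La_1$.

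Suppose first $p=(a,b)$ with $b>0$. Near $p$ the Niemytzki and Euclidean topologies coincide, so it suffices to bound $\dist(q,p)$ below uniformly over $q\in U$. If $q$ comes from an off-axis centre $(x,y)\in F_\alpha$, then $p\notin\Ka((x,y),\alpha)$ gives $\dist((x,y),p)\geqslant\alpha$, whence $\dist(q,p)>(1-\varepsilon)\alpha$ by the triangle inequality. If $q$ comes from an axis centre $(x,0)$, the isolated point $(x,0)$ of $\Ka((x,0),\varepsilon\alpha)$ sits at height $0$, hence at distance $\geqslant b$ from $p$, while for $q$ in the tangent disc I would use $p\notin\Ka((x,0),\alpha)$, that is $(a-x)^2\geqslant b(2\alpha-b)$, to estimate the distance of $p$ from the disc of radius $\varepsilon\alpha$ centred at $(x,\varepsilon\alpha)$; a direct computation then yields
\[
 \dist(q,p)\geqslant\sqrt{2\alpha b(1-\varepsilon)+\varepsilon^2\alpha^2}-\varepsilon\alpha,
\]
a bound independent of $x$ and strictly positive because $b>0$. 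The minimum of these bounds gives a Euclidean ball about $p$ disjoint from $U$.

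Now suppose $p=(a,0)\in\La_1$ and, towards a contradiction, take a sequence $q_n=(u_n,v_n)\in U$ converging to $p$ in $\La$. Since the only point of $\La_1$ in a tangent neighbourhood of $p$ is $p$ itself and $p\notin U$, we may assume $v_n>0$; Niemytzki convergence then forces $v_n\to0$. The off-axis centres are excluded exactly as above, because any $q$ they contribute satisfies $\dist(q,p)\geqslant(1-\varepsilon)\alpha$, contradicting $q_n\to p$; so for all large $n$ the point $q_n$ lies in a tangent disc $\Ka((x_n,0),\varepsilon\alpha)$ with $(x_n,0)\in F_\alpha$. Writing $q_n\in\Ka((x_n,0),\varepsilon\alpha)$ as $(u_n-x_n)^2+v_n^2<2\varepsilon\alpha v_n$ and using $v_n\to0$ forces $u_n-x_n\to0$, hence $x_n\to a$. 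Thus $(a,0)$ lies in the Euclidean closure of $F_\alpha$, contradicting Lemma~\ref{l2}.

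The main obstacle is the off-axis case fed by axis centres. A tangent disc, unlike a Euclidean ball, pinches to a single point of $\La_1$, so knowing only that $p$ avoids the radius-$\varepsilon\alpha$ tangent discs yields no uniform distance bound. The point is instead to exploit that $p$ avoids the larger, radius-$\alpha$ tangent discs: this forces the horizontal separation $(a-x)^2\geqslant b(2\alpha-b)$ of each admissible centre $(x,0)$ from $p$, and that separation — together with $\varepsilon<1$ — is precisely what converts the hypothesis into the $x$-independent positive gap displayed above. I would note that this inequality holds for every $b>0$ (trivially once $b\geqslant2\alpha$), so no degenerate range needs separate treatment.
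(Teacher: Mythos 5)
Your proof is correct, and it reaches the conclusion by a noticeably different route than the paper. Both arguments share the same skeleton -- for each $p\in G$ produce a Niemytzki neighbourhood missing the union, using $p\notin\Ka((x,y),\alpha)$ for every $(x,y)\in F_\alpha$ together with Lemma~\ref{l2} to control the pinching of tangent discs -- but the case decomposition differs. The paper splits on the position of the \emph{centre} $(x,y)\in F_\alpha$ and, whenever the centre lies on $\La_1$, invokes Lemma~\ref{l2} to get $\beta<\dist(F_\alpha,(p,q))$ and then extracts a uniform gap $|a-c|$ by a synthetic construction with the circles $\Ca((x,0),\beta)$, $\Ca((x,\alpha),\alpha)$, $\Ca((x,\varepsilon\alpha),\varepsilon\alpha)$ and a connecting line; the details are left as ``one checks.'' You instead split on the position of the \emph{point} $p$: for $p=(a,b)$ with $b>0$ you derive the explicit, $x$-independent bound $\sqrt{2\alpha b(1-\varepsilon)+\varepsilon^2\alpha^2}-\varepsilon\alpha$ directly from $(a-x)^2\geqslant b(2\alpha-b)$, which shows -- something not visible in the paper's proof -- that Lemma~\ref{l2} is not needed at all when $p$ is off the axis; for $p\in\La_1$ you run a sequential contradiction (legitimate, since the Niemytzki plane is first countable and Niemytzki convergence implies Euclidean convergence) in which $(u_n-x_n)^2<2\varepsilon\alpha v_n$ forces the tangency points $x_n$ to accumulate at $a$, contradicting Lemma~\ref{l2}. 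Your version buys explicit quantitative estimates and isolates exactly where the regular-closedness hypothesis (via Lemma~\ref{l2}) is indispensable, at the cost of a non-constructive sequential step in the on-axis case, whereas the paper's version uniformly exhibits a concrete radius $\gamma$ for every $p\in G$ but leaves the verification of its key inequality to the reader.
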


\begin{proof}
  Fix numbers $\alpha$, $\varepsilon$ and sets $F$, $F_\alpha$, $G$ as in the assumptions.
  For any point $(p,q)\in G,$ we shall find a number $\gamma >0$ such that $(x,y) \in F_\alpha$ implies
  \[
    \Ka ((x,y), \varepsilon\alpha) \cap \Ka((p,q), \gamma)= \emptyset.
  \]
  To find the appropriate $\gamma >0$, just make $\gamma$ meet the following restrictions.
  If $y>0$, then one checks that the inequality $2\gamma \leqslant \alpha -\varepsilon\alpha$ is sufficient, since $(p,q) \notin \Ka((x,y), \alpha)$.
  When $(x,0) \in F_\alpha$, choose a number $\beta >0$ such that $\beta < \dist (F_\alpha, (p,q))$ and $\beta <\varepsilon\alpha$, using Lemma \ref{l2}.
  Fix $(a,b)\in \Ca ((x,0), \beta) \cap \Ca((x,\alpha), \alpha).$
  Let $\Ea$ be the line passing through the points $(x,\varepsilon\alpha)$ and $(a,b)$.
  Fixing $(c,d)\in \Ea \cap \Ca((x,\varepsilon\alpha), \varepsilon\alpha)$, one checks that the inequality $ 2\gamma <|a-c| $ is sufficient.
\end{proof}

\begin{thm}\label{t4}
  The Niemytzki plane is an \rc-space.
\end{thm}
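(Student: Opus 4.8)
The plan is to produce disjoint open neighbourhoods for two disjoint regular closed sets $F$ and $G$ by applying Lemma \ref{l3} symmetrically in $F$ and $G$ and then disjointifying with the classical subtraction trick. The first observation I would record is that $F=\bigcup_{n<\omega}F_{1/n}$. Indeed, a point of $F$ lies off the closed set $G$, so a basic neighbourhood $\Ka((x,y),\alpha)$ of sufficiently small radius misses $G$; hence the point belongs to $F_{1/n}$ as soon as $1/n\leqslant\alpha$, and moreover $F_{1/n}\subseteq F_{1/(n+1)}$ since shrinking the radius can only help. Symmetrically, writing $G_\beta=\{(p,q)\in G\colon \Ka((p,q),\beta)\cap F=\emptyset\}$, we have $G=\bigcup_{n<\omega}G_{1/n}$.

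Next, for each $n$ I would apply Lemma \ref{l3} with $\varepsilon=\tfrac12$ and $\alpha=1/n$. Setting
\[
 U_n=\bigcup\{\Ka((x,y),\tfrac{1}{2n})\colon (x,y)\in F_{1/n}\},\qquad
 V_n=\bigcup\{\Ka((p,q),\tfrac{1}{2n})\colon (p,q)\in G_{1/n}\},
\]
these are open, with $F_{1/n}\subseteq U_n$ and $G_{1/n}\subseteq V_n$ because each $\Ka((x,y),\mu)$ contains its own centre. Lemma \ref{l3} then gives $\cl U_n\cap G=\emptyset$ and, using that $F$ is regular closed, $\cl V_n\cap F=\emptyset$. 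By the first paragraph, $F\subseteq\bigcup_{n}U_n$ and $G\subseteq\bigcup_{n}V_n$.

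Finally I would disjointify in the usual way, putting
\[
 U=\bigcup_{n<\omega}\Bigl(U_n\setminus\bigcup_{k\leqslant n}\cl V_k\Bigr),\qquad
 V=\bigcup_{n<\omega}\Bigl(V_n\setminus\bigcup_{k\leqslant n}\cl U_k\Bigr).
\]
Both sets are open, being unions of open sets with finitely many closed sets removed, and $U\cap V=\emptyset$: a common point would lie in some $U_m$ and some $V_n$ with, say, $m\leqslant n$, but membership in $V_n\setminus\bigcup_{k\leqslant n}\cl U_k$ forces it out of $\cl U_m\supseteq U_m$, a contradiction. Moreover $F\subseteq U$, since a point of $F$ lies in some $U_m$ and, by $\cl V_k\cap F=\emptyset$, in none of the removed sets $\cl V_k$; symmetrically $G\subseteq V$. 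Thus $U$ and $V$ are the required disjoint open neighbourhoods.

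The substance of the argument is already carried by Lemmas \ref{l2} and \ref{l3}, which encode the delicate interplay between Euclidean discs and tangent discs near the axis; the remaining work is essentially bookkeeping. The one point that genuinely needs attention is that regular closedness is used twice and independently — once for $G$ to control $\cl U_n$ and once for $F$ to control $\cl V_n$ — so the symmetry between $F$ and $G$ must be invoked on both sides, which is exactly what the definition of an \rc-space permits.
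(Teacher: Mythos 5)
Your proposal is correct and follows essentially the same route as the paper: the same sets $W_n$, $V_n$ built from $F_{1/n}$ and $G_{1/n}$ via Lemma \ref{l3} (you fix $\varepsilon=\tfrac12$ where the paper keeps a generic $\varepsilon$), followed by the standard disjointification of \cite[Lemma 1.5.14]{en}. You merely spell out the details the paper leaves to the reader, namely that $F=\bigcup_n F_{1/n}$ and the verification that the subtracted unions are open, disjoint, and cover $F$ and $G$.
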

\begin{proof}
  Let $F$ and $G$ be regular closed and disjoint subsets of the Niemytzki plane and let $0<\varepsilon <1$.
  Consider open sets
  \[\textstyle
    W_n= \bigcup \{\Ka((x,y), \frac{\varepsilon}{n})\colon (x,y)\in F_{\frac1n}\}
  \]
  and
  \[\textstyle
    V_n= \bigcup \{\Ka((x,y), \frac{\varepsilon}{n})\colon (x,y)\in G_{\frac1n}\}.
  \]
  By Lemma \ref{l3}, we have $G\cap \cl_\La W_n = \emptyset$ and $F\cap \cl_\La V_n = \emptyset $.
  Just like in the proof of \cite[Lemma 1.5.14]{en}, we obtain open and disjoint sets
  \[
    \bigcup \{W_n\setminus (\cl_\La V_1 \cup \cl_\La V_2 \cup \ldots \cup \cl_\La V_n ) \colon n>0\} \supseteq F
  \]
  and $\bigcup \{V_n\setminus (\cl_\La W_1 \cup \cl_\La W_2 \cup \ldots \cup \cl_\La W_n ) \colon n>0\} \supseteq G$.
  \end{proof}
\section{The Sorgenfrey plane is an \rc-space} 
 Recall that the Sorgenfrey line $\Sa$ is the real line $\Ra$ with the topology generated by half-closed intervals of the form $[x,y)$: in other words, one can consider $\Sa$ as the reals $\Ra$ with the arrow topology.
The Cartesian product $ \Sa \times \Sa =\Sa^2$ equipped with the product topology is usually called the Sorgenfrey plane, compare \cite[p. 103]{ss}.
In this section we show that for any countable $\frak m$ the product space $\Sa^\frak m$ is an \rc-space, despite the fact that for $\frak m > 1$ it is not a normal space.
Our argumentation, although it is a modified discussion from the previous section, requires some adjustments and interpretations. Namely, fix a countable cardinal number $\frak m >0$.
Let $\Ra^\frak m$ be equipped with the product topology.
Thus $\Ra^\frak m$ is a metric space, since $\frak m$ is countable. We now   proceed to use the short-cut ${\bfx} = \{x_k\}_{0\leqslant k<\frak m},$ for any point $\bfx \in \Sa^\frak m$. 
If $n>0$,  then put  $\bfx + \frac1n=\{x_k+\frac1n\}_{0\leqslant k<\frak m}$ and
\[
  [{\bf x}, {\bf q})_n =\{{\bf y}\in \Sa^\frak m\colon x_i
  \leqslant y_i <q_i , \mbox{ whenever } 0\leqslant i < \min\{n, \frak m\} \},
\]
and then put
\[
  \Pa (\textbf{x}, n) = \begin{cases}
   [{\bf x}, {\bf x} +\frac1n)_n, & \mbox{if } \frak m <n ; \\
    [{\bf x}, {\bf x} +\frac1n)_n \times \prod_{n\leqslant k}\Sa_k, &\mbox{if }  n \leqslant \frak m \text{ and } \Sa_k = \Sa.
  \end{cases}
\]
The sets $ \Pa (\textbf{x}, n)$ constitute a base for $\Sa^\frak m$. 

\begin{fac}\label{sf1}
  Let $\{\bfx_k \}_{k>0}$ be a sequence  which converges to a point $\bfx $ with respect to  $\Ra^\frak m$.
  If $ n >0$, then
  \begin{center}
    \hfill$\int_{\Ra^\frak m} \Pa (\textbf{x}, n) \subseteq \bigcup\{ \Pa (\textbf{x}_k,n)\colon k > 0\}.$\hfill\qed
  \end{center}
\end{fac}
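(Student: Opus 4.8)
The plan is to reduce everything to the finitely many coordinates that actually constrain $\Pa(\bfx,n)$ and then to invoke coordinatewise convergence on that finite set. First I would put $N=\min\{n,\frak m\}$ and observe that, because $n$ is a positive integer, $N$ is a genuine finite number whether $\frak m$ is finite or countably infinite; this is the point that lets a coordinatewise argument go through even when $\frak m=\omega$. Viewed inside $\Ra^{\frak m}$, the set $\Pa(\bfx,n)$ equals $\prod_{i<N}[x_i,x_i+\frac1n)\times\prod_{N\leqslant k<\frak m}\Ra$, a product in which all but finitely many factors are the whole line, so its Euclidean interior is the product of the interiors of the factors:
\[
  \int_{\Ra^{\frak m}}\Pa(\bfx,n)=\prod_{i<N}\bigl(x_i,x_i+\tfrac1n\bigr)\times\prod_{N\leqslant k<\frak m}\Ra .
\]
In particular a point $\bfy$ lies in this interior exactly when $x_i<y_i<x_i+\frac1n$ for every $i<N$, the remaining coordinates being unrestricted.

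Next I would fix such a $\bfy$ and isolate the one-dimensional estimate. For each $i<N$ set $\delta_i=\min\{\,y_i-x_i,\ x_i+\frac1n-y_i\,\}$, which is strictly positive precisely because $y_i$ is strictly interior to $[x_i,x_i+\frac1n)$. A short computation then shows that whenever a real number $t$ satisfies $|t-x_i|<\delta_i$, one has $y_i\in[t,t+\frac1n)$: the left inequality $t\leqslant y_i$ follows from $t<x_i+\delta_i\leqslant y_i$, and the right inequality $y_i<t+\frac1n$ follows from $t>x_i-\delta_i\geqslant y_i-\frac1n$. So it suffices to make the approximating points close to $\bfx$ within $\delta_i$ in each of the finitely many coordinates $i<N$.

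Finally I would use the convergence hypothesis. Since $\bfx_k\to\bfx$ in $\Ra^{\frak m}$ and $N$ is finite, there is an index $k>0$ with $|(x_k)_i-x_i|<\delta:=\min_{i<N}\delta_i$ simultaneously for all $i<N$. For that $k$ the estimate above yields $y_i\in[(x_k)_i,(x_k)_i+\frac1n)$ for every $i<N$, while the coordinates $k\geqslant N$ impose no condition, so $\bfy\in\Pa(\bfx_k,n)$; hence $\bfy$ belongs to $\bigcup\{\Pa(\bfx_k,n):k>0\}$, as required. The one place needing care — and the step I would flag as the main, though mild, obstacle — is the bookkeeping around the case $\frak m=\omega$: one must notice that only the $N=\min\{n,\frak m\}$ leading coordinates are constrained, so the passage from coordinatewise convergence in the countable product to a single good index $k$ rests on a finite intersection and never demands uniform control over infinitely many coordinates.
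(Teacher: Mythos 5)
Your argument is correct and is precisely the elementary verification the paper intends: Fact~\ref{sf1} is stated there without proof, and your reduction to the finitely many constrained coordinates $i<\min\{n,\frak m\}$, the positive margins $\delta_i$ coming from strict interiority, and coordinatewise convergence in $\Ra^{\frak m}$ supply exactly the routine check the authors leave to the reader. No gaps.
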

	
Given sets $F, G \subseteq \Sa^{\frak m}$ and a natural number $n>0$,  put 
$$F_{G,n}= \{ \textbf{x} \in F\colon \Pa (\textbf{x}, n) \cap G= \emptyset \}.$$
\begin{lem}\label{l6}
  Suppose $F$ and $G$ are closed and disjoint subsets of $\Sa^\frak m$.
  If $G$ is regular closed, with respect to $\Sa^\frak m$, then the closure of $F_{G,n}$, with respect to $\Ra^\frak m$, is disjoint from $G$.
\end{lem}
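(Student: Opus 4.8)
The plan is to follow the proof of Lemma~\ref{l2} almost verbatim, with the basic boxes $\Pa(\bfx,n)$ playing the role of the discs $\Ka((x,y),\alpha)$, the space $\Ra^\frak m$ playing the role of the Euclidean plane, and Fact~\ref{sf1} replacing Fact~\ref{f1}. Fix a point $\bfx$ in the $\Ra^\frak m$-closure of $F_{G,n}$. Since $\Ra^\frak m$ is metrizable, I first choose a sequence $\{\bfx_k\}_{k>0}\subseteq F_{G,n}$ converging to $\bfx$ with respect to $\Ra^\frak m$. Fact~\ref{sf1} then gives $\int_{\Ra^\frak m}\Pa(\bfx,n)\subseteq\bigcup\{\Pa(\bfx_k,n)\colon k>0\}$, and since each $\bfx_k\in F_{G,n}$ satisfies $\Pa(\bfx_k,n)\cap G=\emptyset$, I conclude $\int_{\Ra^\frak m}\Pa(\bfx,n)\cap G=\emptyset$. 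It remains to deduce $\bfx\notin G$, and this is the only place where the regular closedness of $G$ is used; note that $\bfx$ itself lies on the excluded lower corner of the box and so is not in $\int_{\Ra^\frak m}\Pa(\bfx,n)$, exactly mirroring the punctured disc in Lemma~\ref{l2}.

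Suppose, towards a contradiction, that $\bfx\in G$. Writing $G=\cl_{\Sa^\frak m}\int_{\Sa^\frak m}G$, the Sorgenfrey-open neighbourhood $\Pa(\bfx,n)$ of $\bfx$ must meet $\int_{\Sa^\frak m}G$; I fix a point $\bfz$ in this intersection together with a natural number $m$ satisfying $\Pa(\bfz,m)\subseteq\int_{\Sa^\frak m}G$. The heart of the argument is to arrange that the Euclidean interior of the smaller box lies inside that of the larger one, that is $\int_{\Ra^\frak m}\Pa(\bfz,m)\subseteq\int_{\Ra^\frak m}\Pa(\bfx,n)$. Because $\bfz\in\Pa(\bfx,n)$ gives $x_i\leqslant z_i<x_i+\frac1n$ on the finitely many constrained coordinates, the gaps $x_i+\frac1n-z_i$ are strictly positive; using the nestedness $\Pa(\bfz,m')\subseteq\Pa(\bfz,m)$ for $m'\geqslant m$ I may enlarge $m$ so that $m\geqslant n$ and $\frac1m$ is smaller than every such gap, after which a coordinate-by-coordinate comparison yields the desired inclusion. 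I expect this box bookkeeping to be the main obstacle: two conditions must hold at once, namely $m\geqslant n$ so that $\Pa(\bfz,m)$ constrains at least as many coordinates as $\Pa(\bfx,n)$ (otherwise an unconstrained coordinate of the smaller box would be a full copy of $\Ra$ and could never fit inside a bounded interval $(x_i,x_i+\frac1n)$), and $\frac1m$ small enough to squeeze $(z_i,z_i+\frac1m)$ into $(x_i,x_i+\frac1n)$. The strict inequality $z_i<x_i+\frac1n$, coming from the right-half-openness of the Sorgenfrey boxes, is precisely what creates the necessary room.

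Once this inclusion is available, the contradiction is immediate. The set $\int_{\Ra^\frak m}\Pa(\bfz,m)$ is a nonempty Euclidean-open set contained in $\Pa(\bfz,m)\subseteq\int_{\Sa^\frak m}G\subseteq G$, hence it is a nonempty subset of both $G$ and $\int_{\Ra^\frak m}\Pa(\bfx,n)$, contradicting $\int_{\Ra^\frak m}\Pa(\bfx,n)\cap G=\emptyset$ from the first paragraph. Therefore $\bfx\notin G$, so the $\Ra^\frak m$-closure of $F_{G,n}$ is disjoint from $G$, as required.
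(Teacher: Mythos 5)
Your proof is correct, but it follows a genuinely different route from the paper's own proof of Lemma~\ref{l6}. You transplant the proof of Lemma~\ref{l2}: pick a sequence in $F_{G,n}$ converging to $\bfx$ in the metrizable space $\Ra^{\frak m}$, cover $\int_{\Ra^{\frak m}}\Pa(\bfx,n)$ by the boxes $\Pa(\bfx_k,n)$ via Fact~\ref{sf1} to conclude it misses $G$, and then use regular closedness to plant a nonempty Euclidean-open set $\int_{\Ra^{\frak m}}\Pa(\bfz,m)$ inside both $G$ and $\int_{\Ra^{\frak m}}\Pa(\bfx,n)$, a contradiction. The box bookkeeping you flag as the main obstacle ($m\geqslant n$ so that the smaller box constrains at least as many coordinates, together with $\frac1m$ below each gap $x_i+\frac1n-z_i$) is exactly what is needed, and you carry it out correctly. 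The paper argues in the opposite direction and does not invoke Fact~\ref{sf1} at all: it fixes $\bfx\in G$, uses regular closedness to produce a basic box $\Pa(\bfp,i)\subset G\cap\Pa(\bfx,2n)$ with $x_k<p_k$, and checks that every $\bfy$ in the explicit set $[\bfx-\frac1{2n},\bfp+\frac1i)_{2n}$, a Euclidean neighbourhood of $\bfx$, has $\Pa(\bfy,n)$ meeting $\Pa(\bfp,i)\subseteq G$ and hence lies outside $F_{G,n}$. What the paper's version buys is that it continues one step further and chooses $m$ with $\Pa(\bfx,m)\cap\Pa(\bfy,2n)=\emptyset$ for all $\bfy\in F_{G,n}$; that is, it simultaneously establishes the Sorgenfrey analogue of Lemma~\ref{l3}, and it is this stronger conclusion, not the literal statement of Lemma~\ref{l6}, that Theorem~\ref{t7} uses to obtain $G\cap\cl_{\Sa^{\frak m}}W_n=\emptyset$. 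Your argument proves the lemma exactly as stated, but if you were reconstructing the whole section you would still owe a separate Lemma~\ref{l3}-style step before the proof of Theorem~\ref{t7} goes through.
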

\begin{proof}
  Consider a point $\bfx\in G$.
  We shall find a natural number $m>0$ such that
  \[
    \Pa (\bfx, m) \cap \Pa(\bfy, 2n) =\emptyset,
  \]
  for any $\textbf{y}\in F_{G,n}$.
  Since $G$ is regular closed, we can find a base set
  \[
    \Pa({\bf p}, i)\subset G \cap \Pa(\bfx, 2n)
  \]
  such that $i\geqslant 2n $ and $x_k <p_k$ for $0\leqslant k < \min\{2n, \frak m\}$.
  Thus we get that if $\bfy \in [\bfx -\frac{1}{2n}, {\bf p}+\frac1i)_{2n}$, then $\Pa (\bfy,n)\cap \Pa({\bf p}, i)\not=\emptyset$.
  Therefore, if $\bfy \in [\bfx -\frac{1}{2n}, {\bf p}+\frac1i)_{2n}$, then $\bfy \notin F_{G,n}$.
  Choosing $m>0$ such that
  \[\textstyle
    \frac1m < p_k - x_k, \mbox{ for } 0\leqslant k <\min\{2n, \frak m\},
  \]
  we obtain
  \[
    \Pa(\bfy, 2n) \cap \Pa(\bfx, m) =\emptyset,
  \]
  for any $\bfy \in F_{G,n}$.
\end{proof}

\begin{thm}\label{t7}
  If $\frak m$ is a countable cardinal number, then the space $\Sa^\frak m$ is 
	\rc-space.
\end{thm}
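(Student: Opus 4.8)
The plan is to imitate, step for step, the proof of Theorem \ref{t4} for the Niemytzki plane, replacing the Euclidean discs $\Ka((x,y),\alpha)$ by the basic half-open boxes $\Pa(\bfx,n)$ and the continuous radius $\alpha$ by the discrete parameter $n$, with the passage from $\Pa(\cdot,n)$ to $\Pa(\cdot,2n)$ playing the role of the contraction factor $\varepsilon$. Concretely, I would fix disjoint regular closed sets $F,G\subseteq\Sa^\frak m$ and, for every $n>0$, set
\[ W_n=\bigcup\{\Pa(\bfx,2n)\colon \bfx\in F_{G,n}\}, \qquad V_n=\bigcup\{\Pa(\bfx,2n)\colon \bfx\in G_{F,n}\}. \]
These are open as unions of basic open sets. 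First I would record that $F\subseteq\bigcup_{n>0}W_n$ and $G\subseteq\bigcup_{n>0}V_n$: since $F$ and $G$ are disjoint and closed and the sets $\Pa(\bfx,n)$ form a neighbourhood base at $\bfx$, each $\bfx\in F$ has some $\Pa(\bfx,n)$ missing $G$, whence $\bfx\in F_{G,n}$ and $\bfx\in\Pa(\bfx,2n)\subseteq W_n$ for that $n$; symmetrically for $G$.

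The second step is the separation of closures: $G\cap\cl_{\Sa^\frak m}W_n=\emptyset$ and $F\cap\cl_{\Sa^\frak m}V_n=\emptyset$ for every $n$. This is exactly what the computation proving Lemma \ref{l6} delivers: given $\bfx\in G$, regular closedness of $G$ lets me slip a basic box $\Pa(\bf p,i)\subseteq G\cap\Pa(\bfx,2n)$ with $x_k<p_k$, and then choosing $m$ with $\frac1m<p_k-x_k$ produces a Sorgenfrey neighbourhood $\Pa(\bfx,m)$ of $\bfx$ disjoint from every $\Pa(\bfy,2n)$ with $\bfy\in F_{G,n}$, hence disjoint from $W_n$. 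Thus $\bfx\notin\cl_{\Sa^\frak m}W_n$, and symmetrically with the roles of $F$ and $G$ exchanged.

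With these two facts in hand, the final step is the familiar construction from \cite[Lemma 1.5.14]{en}, identical to the one closing the proof of Theorem \ref{t4}: the sets
\[ \bigcup\{W_n\setminus(\cl V_1\cup\ldots\cup\cl V_n)\colon n>0\} \quad\text{and}\quad \bigcup\{V_n\setminus(\cl W_1\cup\ldots\cup\cl W_n)\colon n>0\} \]
are open, contain $F$ and $G$ respectively (because $F$ avoids every $\cl V_j$ and $G$ avoids every $\cl W_j$), and are disjoint: if a point lay in the $n$-th term of the first set and the $m$-th term of the second with, say, $n\leqslant m$, it would sit in $W_n\subseteq\cl W_n$ while simultaneously avoiding $\cl W_1\cup\ldots\cup\cl W_m\supseteq\cl W_n$, a contradiction.

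The main obstacle, and the only place where the two ambient topologies must be kept straight, is the closure-separation step. Lemma \ref{l6} is phrased in terms of the $\Ra^\frak m$-closure of the set of centres $F_{G,n}$, whereas Theorem \ref{t7} needs the $\Sa^\frak m$-closure of the enlarged union $W_n$ of boxes. I would therefore extract from the proof of Lemma \ref{l6} its genuinely stronger content, namely that each $\bfx\in G$ admits a half-open Sorgenfrey box $\Pa(\bfx,m)$ meeting no $\Pa(\bfy,2n)$ with $\bfy\in F_{G,n}$, rather than merely its stated conclusion. Checking that this box is indeed disjoint from $W_n$ reduces to the coordinatewise interval comparison already carried out there, and the decisive use of the hypothesis that $G$ is regular closed (not merely closed) is precisely the existence of the interior box $\Pa(\bf p,i)\subseteq G$: without it the centres $\bfy$ could accumulate at boundary points of $G$ and the separation would break down, exactly as regular closedness of $G$ was the essential hypothesis of Lemma \ref{l2} in the Niemytzki argument.
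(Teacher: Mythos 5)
Your proof is correct and follows essentially the same route as the paper: the same sets $W_n$ and $V_n$, the separation of closures via Lemma \ref{l6}, and the standard disjointification from \cite[Lemma 1.5.14]{en}. Your observation that one must invoke the content of the proof of Lemma \ref{l6} (the box $\Pa(\bfx,m)$ misses every $\Pa(\bfy,2n)$ with $\bfy\in F_{G,n}$, hence all of $W_n$) rather than its literal statement about the $\Ra^{\frak m}$-closure of $F_{G,n}$ is exactly right and supplies a step the paper leaves implicit.
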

\begin{proof}
  Let $F$ and $G$ be regular closed and disjoint subsets of $\Sa^\frak m$.
  Consider open sets
  \[
    W_n= \bigcup \{\Pa(\bfx, 2n) \colon \bfx  \in F_{G,n}\}
  \]
  and
  \[
    V_n= \bigcup \{\Pa(\bfx, 2n)\colon \bfx\in G_{F,n}\}.
  \]
  By Lemma \ref{l6}, we get $G\cap \cl_{\Sa^\frak m}W_n= \emptyset $ and $F\cap \cl_{\Sa^\frak m}V_n= \emptyset$.
  Just like in the proof of Theorem \ref{t4} or \cite[Lemma 1.5.14]{en}, we are done.
\end{proof}

\begin{cor}
  The Sorgenfrey plane is an \rc-space.
\end{cor}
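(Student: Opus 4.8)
The plan is to recognize this statement as the instance $\frak m = 2$ of Theorem \ref{t7}. By definition the Sorgenfrey plane is the product $\Sa \times \Sa = \Sa^2$, and the cardinal number $2$ is finite, hence certainly countable. Since Theorem \ref{t7} asserts that $\Sa^\frak m$ is an \rc-space for every countable cardinal $\frak m$, it applies directly with $\frak m = 2$, and the corollary follows at once.

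Before invoking the theorem I would briefly confirm that the base $\Pa(\bfx, n)$ used in its proof reduces to the familiar basis of the Sorgenfrey plane when $\frak m = 2$. For $n > 2$ the set $\Pa(\bfx, n)$ is the half-open square $[\bfx, \bfx + \frac1n)_n$, while for $n \in \{1,2\}$ it keeps its product form $[\bfx, \bfx + \frac1n)_n \times \prod_{n\leqslant k}\Sa_k$; in either case these are exactly the standard half-closed rectangular neighbourhoods generating $\Sa^2$. This confirms that no reinterpretation of the ambient space is needed and that the separation argument of Theorem \ref{t7} is literally about the Sorgenfrey plane once we set $\frak m = 2$.

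There is essentially no obstacle here: all of the substantive work — the key estimate separating $F_{G,n}$ from $G$ in the Euclidean metric of $\Ra^\frak m$ (Lemma \ref{l6}), together with the interlacing of the sets $W_n$ and $V_n$ familiar from the proof of Theorem \ref{t4} — has already been carried out in full generality. The corollary is therefore purely a matter of specialization, and I would present it as a one-line deduction from Theorem \ref{t7}, noting only that $2$ is a countable cardinal so that the hypothesis of that theorem is met.
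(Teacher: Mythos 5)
Your proposal is correct and matches the paper's own proof, which likewise derives the corollary as the special case $\frak m = 2$ of Theorem \ref{t7} (the paper merely adds a figure illustrating the two-dimensional instance of the argument from Lemma \ref{l6}). Your additional check that the base sets $\Pa(\bfx,n)$ reduce to the standard half-open rectangles of $\Sa^2$ is a harmless and reasonable verification.
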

\begin{proof}
  The corollary is special case of Theorem \ref{t7},  as illustrated in the following figure.
      \tikzfading[name=fade right,top color=transparent!0, bottom color=transparent!100]%
  \begin{center}
    \begin{tikzpicture}
      \fill[green!30!blue] (4.5,.7) rectangle (12.5,8.7);
      \draw[white,fill,path fading=fade right,fading transform={rotate=90}] (3,0) rectangle (14,8);%
      \draw[white,fill,path fading=fade right] (4.5,.8) rectangle (13,8.7);%
      \draw[pattern color=gray,pattern=north west lines] (6.5,9) -- (4,4) --(7.7, 5) --(7.7, 9);
      
      \otd{4.5}{.7}8{{\bfz} \notin F_{G,n}}
      \otc444{}
      \otc008{(x_1-\frac1{2n}, x_2-\frac1{2n})}
      \ot{7.8}{3.1}4{\bfy \in F_{G,n}}{\Pa(\bfy,2n)}
      \ot{5.9}{5.8}{1.7}{\bfp=(p_1,p_2)}{\Pa(\bfp,i)}
      \draw (8.8,7.49) node[fill=white,scale=0.7] {$(p_1+\frac1i,p_2+\frac1i)$};
      \ot44{1.5}{\bfx=(x_1,x_2)}{\Pa(\bfx, m)}
      \draw (8.55,8.03) node[scale=0.7] {$\bfx +\frac1{2n}$};
      \draw (12.98,8.7) node[scale=0.7] {${\bf z}+\frac1n$};
      \draw (7,8.4) node[fill=white,scale=0.9] {$G$};
      \draw (10.5,1.7) node[fill=white,scale=1] {$\Pa(\bfz, n)$};
    \end{tikzpicture}
  \end{center}
If ${\bf z}\in [x_1-\frac1{2n},p_1+\frac1{i})\times [x_2-\frac1{2n}, p_2+\frac1{i})$, then $G\cap \Pa({\bf z}, n)\not=\emptyset,$ hence $z\notin F_{G,n}.$ 
But if $$\bfy\notin (x_1-\frac1{2n},x_1+\frac1{m})\times (x_2-\frac1{2n}, x_2+\frac1{m}),$$  then $\Pa(\bfy, 2n) \cap \Pa(\bfx,m)=\emptyset.$
\end{proof}

\section{More examples of completely regular spaces which are not \rc-spaces  }

 Let $\La^1 \oplus \La^2 \oplus \La^3$  be the sum of three copies of the Niemytzki plane, for the definition of the sum of spaces see \cite[p. 103]{en}. Consider a quotient $X$ of this sum, obtained  by gluing copies of the rationals $\Qa\subset  \La_1 \subset \La^1$ and  $\Qa\subset  \La_1 \subset \La^2$ and copies of irrationals $\Ia\subset  \La_1 \subset \La^2$ and  $\Ia\subset  \La_1 \subset \La^3$. This quotient is a completely regular space which is not an \rc-space. Indeed, subspaces $\La^1\subset X$ and $\La^3\subset X$ are regular closed and if $V\subset X$ and $U \subset X$ are open sets such that $\La^1 \subset V$ and $\La^3 \subset U$, then the intersection $\La^2\cap U \cap V $ is non-empty, compare \cite{cho} or \cite[pp. 101--102]{ss}.

  Let us note that the above construction of a quotient space $X$ relies in simplification of the constructions begun in the paper \cite{jon}. Of course, by analogy one  can get many examples which are not \rc-spaces, using other  completely regular spaces which are not  normal. For example, let $\Sa_1 \oplus \Sa_2 \oplus \Sa_3$  be the sum of three copies of the plane with the half-open square topology, compare \cite[p. 103]{ss}. Consider a quotient $Y$ of this sum, obtained by gluing copies of the rationals $ \Qa = \{(\alpha, -\alpha): \alpha \mbox{ is a rational number}  \} \subset  \Sa_1$ and $ \Qa \subset \Sa_2$ and   copies of irrationals $\Ia= \{(\alpha, -\alpha): \alpha \mbox{ is a irrational number}\}\subset  \Sa_2$ and  $\Ia\subset   \Sa_3$. According to a similar argument as for $X $ above,  the quotient space $Y$  is  completely regular and is not an \rc-space.

\end{document}